\documentclass[12pt,a4paper]{article}
\usepackage[T1]{fontenc}
\usepackage{lmodern}
\usepackage{amsmath,amssymb,amsthm}
\usepackage[margin=25mm]{geometry}
\usepackage[expansion=false]{microtype}
\usepackage{needspace}
\usepackage{setspace}
\usepackage{xcolor}
\usepackage[colorlinks=true,linkcolor=blue!45!black,citecolor=blue!45!black,urlcolor=blue!45!black]{hyperref}
\newtheorem{theorem}{Theorem}[section]
\newtheorem{proposition}[theorem]{Proposition}
\newtheorem{lemma}[theorem]{Lemma}
\newtheorem{corollary}[theorem]{Corollary}
\theoremstyle{remark}
\newtheorem{remark}[theorem]{Remark}
\newcommand{\dd}{\,d}
\newcommand{\ind}{\mathbf 1}
\newcommand{\common}{\mathcal M}
\newcommand{\exc}{\mathcal E}
\newcommand{\rotpush}{\mathsf U}
\newcommand{\cycsum}{\mathsf A}
\numberwithin{equation}{section}
\hypersetup{pdftitle={Common invariant measures for beta-transformations with rational affine relations},pdfsubject={Research article prepared for Dynamical Systems: An International Journal},pdfauthor={Hang Zhao}}
\title{\LARGE\bfseries Common invariant measures for beta-transformations with rational affine relations}
\author{\normalsize Hang Zhao\\[4pt]
\normalsize School of Mathematics and Big Data, Neijiang Normal University\\
\normalsize Neijiang, Sichuan 641100, China\\[3pt]
\normalsize E-mail: \href{mailto:hangzhaoscu@163.com}{hangzhaoscu@163.com}\\
\normalsize ORCID: \href{https://orcid.org/0009-0003-2445-3477}{0009-0003-2445-3477}}
\date{}

\begin{document}
\maketitle
\vspace{-1.5em}
\begin{abstract}
\normalsize
We classify common invariant probabilities for two beta-transformations with a rational affine relation between their bases. Let $\beta>1$ be irrational and $\gamma=a\beta+c>1$, where $a,c\in\mathbb Q$ and $a+c\ne1$. Every finite nonnegative measure that is singular with respect to Lebesgue measure and invariant under both maps is a multiple of the point mass at zero. No entropy, ergodicity or multiplicative independence assumption is needed. The proof constructs a finite signed measure invariant under an irrational rotation and relates its mass to the first moment of the original measure. Combining this result with the known classification of coincident absolutely continuous invariant probabilities gives all common invariant probabilities in this family. The only additional measures are mixtures with a common absolutely continuous probability for adjacent bases in a known quadratic family. We also determine all common fixed points on the excluded line $a+c=1$.
\end{abstract}

\noindent\textbf{Keywords:} beta-transformation, common invariant measure, measure rigidity, irrational rotation, rational affine relation

\smallskip
\noindent\textbf{Mathematics Subject Classification (2020):} Primary 37A05; Secondary 37E05, 37A44.

\section{Introduction}

Beta-transformations are simple examples of piecewise expanding interval maps. Each map has many invariant probabilities, but requiring one probability to be invariant under two maps can impose strong restrictions. We study how a rational affine relation between the two slopes restricts these common invariant probabilities.

For $b>1$, let
\begin{equation}\label{eq:map}
T_b(x)=bx-\lfloor bx\rfloor,\qquad x\in X:=[0,1).
\end{equation}
Write $\lambda$ for Lebesgue probability measure on $X$ and $\mathcal P(X)$ for the Borel probability measures on $X$. For a Borel map $F$, write $F_*$ for pushforward, so $F_*\mu(E)=\mu(F^{-1}E)$. The invariance condition is $F_*\mu=\mu$. Each $T_b$ has a unique absolutely continuous invariant probability $\nu_b$, called the R\'enyi--Parry measure, and $\nu_b$ is equivalent to $\lambda$ \cite{Renyi,Parry}.

The point mass $\delta_0$ is invariant under every $T_b$. A classification of common invariant probabilities must therefore account for singular measures as well as densities. Here singular means supported on a set of Lebesgue measure zero; such a measure need not be atomic. Classifying when two R\'enyi--Parry measures coincide does not by itself settle this question.

We prove that, for a family of rational affine relations between the bases, every singular common invariant probability is $\delta_0$. The maps need not commute. Their two compositions instead differ by rotations on a finite partition, and this difference is the main ingredient of the proof. Combining the singular result with the known absolutely continuous classification then gives all common invariant probabilities in this family.

\subsection{Main results}

The parameter condition excludes the case in which the same rational relation holds between the distances of the bases from one.

\begin{theorem}[Singular rigidity]\label{thm:singular}
Let $\beta>1$ be irrational and let
\begin{equation}\label{eq:affine-parameters}
\gamma=a\beta+c>1,\qquad a,c\in\mathbb Q,\qquad a+c\ne1.
\end{equation}
Every finite nonnegative measure $\mu\perp\lambda$ with
$$
(T_\beta)_*\mu=(T_\gamma)_*\mu=\mu
$$
is of the form $\mu=t\delta_0$ for some $t\ge0$.
\end{theorem}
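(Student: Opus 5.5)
The plan is to exploit the fact that $T_\beta$ and $T_\gamma$ almost commute. For $x\in X$ write $i(x)=\lfloor\beta x\rfloor$ and $j(x)=\lfloor\gamma x\rfloor$. A direct computation gives
$$
T_\beta T_\gamma x\equiv \beta\gamma x-\beta j(x),\qquad T_\gamma T_\beta x\equiv \beta\gamma x-\gamma i(x)\pmod 1 .
$$
Substituting $\gamma=a\beta+c$ shows that on each cell $P_{ij}=\{i(x)=i,\ j(x)=j\}$ of a finite partition,
$$
T_\beta T_\gamma x\equiv T_\gamma T_\beta x+(a i-j)\beta+c i\pmod 1 .
$$
So the two compositions differ on each cell by a rotation. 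Let $q\in\mathbb N$ clear the denominators of $a$ and $c$. After multiplying by $q$, that is, passing to the circle via $x\mapsto qx \bmod 1$, each rotation amount becomes $n_{ij}\beta+(\text{integer})$ with $n_{ij}=q(ai-j)\in\mathbb Z$. All these rotations are therefore powers of the single irrational rotation $R\colon y\mapsto y+\beta$.

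Next I build the signed measure. Let $\rho_{ij}$ be the pushforward of $\mu|_{P_{ij}}$ under $x\mapsto qT_\gamma T_\beta x \bmod 1$ on $\mathbb T=\mathbb R/\mathbb Z$. Invariance of $\mu$ under both maps gives invariance under both compositions, so
$$
\sum_{i,j}(R^{n_{ij}})_*\rho_{ij}=\sum_{i,j}\rho_{ij}.
$$
Group the cells by the value $n=n_{ij}$ and set $\sigma_n=\sum_{n_{ij}=n}\rho_{ij}$, so that $\sum_n (R^n)_*\sigma_n=\sum_n\sigma_n$. Finitely many powers occur. The telescoping identity $R^n_*-\mathrm{id}=(R_*-\mathrm{id})(\mathrm{id}+R_*+\dots+R^{n-1}_*)$, with the analogous identity for $n<0$, then yields a finite signed measure
$$
\eta=\sum_{n>0}\sum_{k=0}^{n-1}(R^k)_*\sigma_n-\sum_{n<0}\sum_{k=1}^{|n|}(R^{-k})_*\sigma_n
$$
satisfying $(R_*-\mathrm{id})\eta=0$. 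Since $\beta$ is irrational, a finite signed measure invariant under $R$ is a constant multiple of Lebesgue measure, as Fourier coefficients show. Because $\mu\perp\lambda$ and the maps involved are piecewise affine with nonzero slope, every $\rho_{ij}$ is singular, hence so is $\eta$. Therefore $\eta=0$, and in particular its total mass vanishes:
$$
\eta(\mathbb T)=\sum_{i,j}n_{ij}\,\mu(P_{ij})=q\int\bigl(a\,i(x)-j(x)\bigr)\,d\mu(x)=0 .
$$

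Finally I relate this mass to the first moment $m=\int x\,d\mu$. Integrating $T_\beta x=\beta x-i(x)$ against $\mu$ and using invariance gives $\int i\,d\mu=(\beta-1)m$. Likewise $\int j\,d\mu=(\gamma-1)m$. Hence
$$
0=a(\beta-1)m-(a\beta+c-1)m=(1-a-c)\,m .
$$
Since $a+c\ne1$, we get $m=0$. As $\mu\ge0$ and $x\ge0$, this forces $\mu$ to be concentrated at $0$, so $\mu=t\delta_0$.

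The main obstacle is the signed-measure step. One must check that the bookkeeping is exact: the integer parts of the rotation amounts really do drop out after scaling by $q$, and the telescoped $\eta$ is genuinely finite and singular, so that invariance under $R$ forces $\eta=0$ rather than merely $\eta=\text{const}\cdot\lambda$. I would verify singularity first, since pushforwards of singular measures under finitely many piecewise affine nonsingular maps stay singular.
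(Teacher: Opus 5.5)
Your proposal is correct and follows the paper's proof essentially step for step. The paper also compares the two compositions on the digit partition and applies $\pi_q(x)=\{qx\}$, so that each defect becomes an integer power of the rotation by $\beta$; it then telescopes with the operators $H_j(V)$ to obtain a singular rotation-invariant signed measure, and identifies its mass as $D\int_X x\,d\mu$ with $D=q(a+c-1)\ne0$.

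The differences are only cosmetic. You compose in the opposite order, so your $n_{ij}$ is the negative of the paper's $j(x)=qe(x)-pd(x)$ and your mass is $q(1-a-c)m=-Dm$. You also index cells by digit pairs before grouping by $n$, whereas the paper partitions directly by the level sets of $j$.
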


Combining Theorem~\ref{thm:singular} with the known classification of coincident absolutely continuous invariant measures gives all common invariant probabilities.

Let
\begin{equation}\label{eq:exceptional}
\exc=\{b>1:b^2-ub-v=0\text{ for some }u,v\in\mathbb N,
\ 1\le v\le u\},
\end{equation}
where $\mathbb N=\{1,2,\ldots\}$.

\begin{theorem}[Classification for rational affine relations]\label{thm:general}
Under the assumptions of Theorem~\ref{thm:singular}, set $b_- =\min\{\beta,\gamma\}$ and $b_+=\max\{\beta,\gamma\}$. If $b_+=b_-+1$ and $b_-\in\exc$, the common invariant probabilities are exactly
$$
\{(1-t)\delta_0+t\nu_{b_-}:0\le t\le1\},
$$
and $\nu_{b_-}=\nu_{b_+}$. In every other case, $\delta_0$ is the only common invariant probability.
\end{theorem}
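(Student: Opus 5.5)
The plan is to deduce Theorem~\ref{thm:general} from Theorem~\ref{thm:singular} via the Lebesgue decomposition, and then to apply the known classification of coincident R\'enyi--Parry measures to the absolutely continuous part.

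\emph{Step 1: split the measure.} Let $\mu$ be a common invariant probability and write $\mu=\mu_{ac}+\mu_s$ with $\mu_{ac}\ll\lambda$ and $\mu_s\perp\lambda$.

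\emph{Step 2: each part is invariant.} Both $T_\beta$ and $T_\gamma$ are nonsingular: preimages of Lebesgue-null sets are Lebesgue-null, since each map is piecewise affine with finitely many branches. Hence $(T_b)_*$ maps absolutely continuous measures to absolutely continuous measures. The pushforward of a singular measure may a priori have a nonzero absolutely continuous part, so I argue differently. Let $N$ be a Lebesgue-null set carrying $\mu_s$. Then $(T_b)_*\mu_s$ is carried by $T_b(N)$, which is again null, because each branch is affine and there are finitely many branches. So $(T_b)_*\mu_s$ is singular. Uniqueness of the Lebesgue decomposition of $\mu=(T_b)_*\mu_{ac}+(T_b)_*\mu_s$ then gives that $\mu_{ac}$ and $\mu_s$ are each invariant under both maps.

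\emph{Step 3: the singular part.} Theorem~\ref{thm:singular} applies to $\mu_s$ and gives $\mu_s=(1-t)\delta_0$.

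\emph{Step 4: the absolutely continuous part.} If $t>0$, then $\mu_{ac}/t$ is an absolutely continuous probability invariant under $T_\beta$. By uniqueness of the absolutely continuous invariant probability, $\mu_{ac}/t=\nu_\beta$. Likewise $\mu_{ac}/t=\nu_\gamma$, so $\nu_\beta=\nu_\gamma$.

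\emph{Step 5: invoke the known classification.} I cite the known result that $\nu_b=\nu_{b'}$ for $1<b<b'$ holds exactly when $b'=b+1$ and $b\in\exc$. This is the classification of coincident absolutely continuous invariant measures that the paper says it combines with Theorem~\ref{thm:singular}.
\begin{itemize}
\item In that case, every mixture $(1-t)\delta_0+t\nu_{b_-}$ is indeed invariant under both maps, because each component is.
\item Otherwise $t=0$, so $\mu=\delta_0$.
\end{itemize}

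\emph{Step 6: check compatibility.} One should confirm that the exceptional case is consistent with the hypotheses of the theorem.
\begin{itemize}
\item $b_-\in\exc$ satisfies $b^2-ub-v=0$ with $1\le v\le u$. Its discriminant $u^2+4v$ lies strictly between $u^2$ and $(u+2)^2$, and it is not $(u+1)^2$ by parity. So $b_-$ is irrational.
\item The relation $\gamma=\beta\pm1$ corresponds to $a=1$, $c=\pm1$, so $a+c\in\{0,2\}\neq1$.
\end{itemize}
The case actually occurs and no contradiction arises.

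The main obstacle is Step 2, the invariance of the singular part. It rests on the forward images of null sets being null, which holds here because the maps are piecewise affine with finitely many pieces. The rest is bookkeeping around Theorem~\ref{thm:singular} and the cited classification.
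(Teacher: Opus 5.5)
Your route matches the paper's: Lebesgue decomposition, invariance of each part via forward images of null sets, Theorem~\ref{thm:singular} for the singular part, uniqueness of the R\'enyi--Parry measure for the absolutely continuous part, then the coincidence classification. However, Step~5 has a gap.

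The classification you cite is false as you state it. For the integer bases $2<3$ one has $\nu_2=\nu_3=\lambda$, yet $2\notin\exc$; your own Step~6 shows that every element of $\exc$ is irrational. The known result \eqref{eq:classical} applies only to two \emph{noninteger} bases.

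Under the hypotheses of the theorem, $\beta$ is irrational, but $\gamma$ can be an integer. Take $a=0$ and $c\in\mathbb Z$ with $c\ge2$, for instance $\gamma=2$, where $a+c=2\ne1$. This is a subcase of the second family in Corollary~\ref{thm:affine}. There Step~5 invokes a theorem outside its range, and nothing in your argument rules out $t>0$.

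The missing argument is short, and the paper gives it as a separate case. If $\gamma\in\mathbb Z$, then $\nu_\gamma=\lambda$, so $t>0$ would force $\lambda$ to be $T_\beta$-invariant. Write $\beta=n+u$ with $0<u<1$. Then $(T_\beta)_*\lambda$ has density $(n+\ind_{[0,u)})/\beta$, which is not constant. Hence $t=0$ in this case, and you may apply \eqref{eq:classical} only once both bases are known to be nonintegers.

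With this case added, your proof is complete. Your Step~6 is a harmless consistency check, which the paper records as a remark after the statement rather than inside the proof.
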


The bases in \eqref{eq:affine-parameters} are distinct: equality would force $a=1,c=0$, contrary to $a+c\ne1$. The exception in Theorem~\ref{thm:general} can occur only for $(a,c)=(1,1)$ or $(1,-1)$. Indeed, $\gamma-\beta=\pm1$ and the irrationality of $\beta$ imply these identities. The quadratic family and its density are already known \cite{BY,HW}; the point of Theorem~\ref{thm:singular} is to control the remaining measures.

\begin{remark}[A symmetric parameter condition]\label{rem:symmetric}
The parameter assumptions can also be stated as follows: $1,\beta,\gamma$ are linearly dependent over $\mathbb Q$, at least one base is irrational, and
$$
\frac{\gamma-1}{\beta-1}\notin\mathbb Q.
$$
Indeed, after choosing $\beta$ to be irrational, rational dependence gives a unique relation $\gamma=a\beta+c$ with $a,c\in\mathbb Q$. Since
$$
\frac{\gamma-1}{\beta-1}=a+\frac{a+c-1}{\beta-1},
$$
this ratio is irrational exactly when $a+c\ne1$. This formulation is symmetric in the two bases. It concerns their distances from one, and differs from multiplicative independence, which concerns $\log\gamma/\log\beta$.
\end{remark}

For comparison with the rational-gap problem, define
$$
\common_{\beta,c}=\{\mu\in\mathcal P(X):(T_\beta)_*\mu=(T_{\beta+c})_*\mu=\mu\}.
$$
Theorem~\ref{thm:general} has the following explicit consequence.

\begin{corollary}[Complete classification for rational gaps]\label{thm:main}
Let $\beta>1$ be irrational and let $c\in\mathbb Q_{>0}$. Then
\begin{equation}\label{eq:classification}
\common_{\beta,c}=
\begin{cases}
\{(1-t)\delta_0+t\nu_\beta:0\le t\le1\},
&c=1\text{ and }\beta\in\exc,\\[3pt]
\{\delta_0\},&\text{otherwise}.
\end{cases}
\end{equation}
In the exceptional case, $\nu_\beta=\nu_{\beta+1}$. If $\beta^2-u\beta-v=0$ with $1\le v\le u$, set
$$
a=\beta-u=\frac v\beta\in(0,1).
$$
The common measure has density
\begin{equation}\label{eq:density}
\frac{d\nu_\beta}{d\lambda}(x)
=\frac{1+\beta^{-1}\ind_{[0,a)}(x)}{1+a/\beta}.
\end{equation}
The mixing parameter in \eqref{eq:classification} is unique and equals $1-\mu(\{0\})$. In every case, $\delta_0$ is the only singular common invariant probability measure.
\end{corollary}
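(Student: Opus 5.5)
The plan is to derive Corollary~\ref{thm:main} from Theorem~\ref{thm:general} with $a=1$ and the given rational $c>0$, and then to make the exceptional case explicit. First I check the hypotheses of Theorem~\ref{thm:singular}. Here $\gamma=\beta+c>\beta>1$ and $a+c=1+c\ne1$, so they hold. Moreover $b_-=\beta$ and $b_+=\beta+c$, so the exceptional condition $b_+=b_-+1$ becomes $c=1$, together with $b_-\in\exc$, i.e.\ $\beta\in\exc$. Theorem~\ref{thm:general} then gives \eqref{eq:classification} directly, including $\nu_\beta=\nu_{\beta+1}$ in the exceptional case. The final sentence about singular common invariant probabilities is Theorem~\ref{thm:singular} applied to $\mu\in\common_{\beta,c}$ with $\mu\perp\lambda$: then $\mu=t\delta_0$, and $\mu(X)=1$ forces $t=1$.

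For uniqueness of the mixing parameter, take $\mu=(1-t)\delta_0+t\nu_\beta$. Since $\nu_\beta$ is absolutely continuous, $\nu_\beta(\{0\})=0$. Hence $\mu(\{0\})=1-t$, which determines $t$.

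It remains to verify the density \eqref{eq:density}. Since the R\'enyi--Parry measure is unique, it suffices to check that the proposed density $h$ is invariant under the transfer operator of $T_\beta$ and integrates to one. Let $\beta^2=u\beta+v$ with $1\le v\le u$, and set $a=\beta-u=v/\beta$. The bound $\beta<u+1$ holds because $\beta^2-u\beta-v$ is negative at $u$ and positive at $u+1$ (since $v\le u$), so $0<a<1$. Then $T_\beta$ has $u$ full branches on $[k/\beta,(k+1)/\beta)$ for $k<u$, and one partial branch on $[u/\beta,1)$ with image $[0,a)$. The Perron--Frobenius equation reads
\[
\beta h(y)=\sum_{k=0}^{u-1}h\Bigl(\frac{y+k}{\beta}\Bigr)+\ind_{[0,a)}(y)\,h\Bigl(\frac{y+u}{\beta}\Bigr).
\]
Write $h=C(1+\beta^{-1}\ind_{[0,a)})$. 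The points $(y+k)/\beta$ lie in $[0,a)$ exactly when $y+k<v$. For $y\in[0,a)$ and $y<1$, this happens for $k=0,\dots,v-1$, so there are $v$ such terms among $k<u$. The partial branch point $(y+u)/\beta\ge u/\beta\ge v/\beta=a$ lies outside $[0,a)$. The right-hand side is therefore $C\bigl(u+1+v/\beta\bigr)$ on $[0,a)$ and $C\bigl(u+v/\beta\bigr)$ on $[a,1)$. These must equal $C\beta(1+1/\beta)=C(\beta+1)$ and $C\beta$ respectively, and both identities hold since $\beta=u+v/\beta$. Normalising gives $C=(1+a/\beta)^{-1}$.

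There is one step I expect to be the main obstacle. On $[a,1)$ I must confirm that no term $(y+k)/\beta$ with $k<u$ lands in $[0,a)$. This requires $y+k\ge v$ whenever $k\ge v$, which is automatic. For $k<v$ it fails only if $y+k<v$, and since $y<1$ that would force $k=v-1$ and $y<1$. So the count of terms in $[0,a)$ depends on whether $y+v-1<v$, which holds for all $y<1$. That would give $v$ terms on $[a,1)$ as well, which contradicts the computation above. So this part needs a careful recount: the condition is really $(y+k)/\beta<a=v/\beta$, i.e.\ $y+k<v$, and this has to be reconciled with the claimed density. If the direct verification stalls, I will simply cite \cite{BY,HW} for the density formula, as the paper does.
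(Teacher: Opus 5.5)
Your proposal is correct and follows the paper's route: apply Theorem~\ref{thm:general} with $a=1$, so the exceptional case is $c=1$ and $\beta\in\exc$; read off the mixing parameter from $\nu_\beta(\{0\})=0$; and verify \eqref{eq:density} with the transfer operator as in Appendix~\ref{app:density} (checking only $T_\beta$ is enough, since Theorem~\ref{thm:general} already gives $\nu_\beta=\nu_{\beta+1}$). The ``obstacle'' in your last paragraph is not real: $y+k<v$ holds for exactly $k=0,\dots,v-1$ for every $y\in[0,1)$, not only on $[0,a)$, and your own formula $C(u+v/\beta)$ on $[a,1)$ already counts these $v$ terms, so the right-hand side equals $C\bigl(u+\ind_{[0,a)}(y)+v/\beta\bigr)=\beta h(y)$ everywhere and your verification is complete without falling back on a citation.
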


\begin{corollary}[Further parameter families]\label{thm:affine}
Each of the following pairs has $\delta_0$ as its only common invariant probability:
\begin{enumerate}
\item $\beta$ and $k\beta+c$, where $\beta>1$ is irrational, $k\ge2$ is an integer, and $c\in\mathbb Q_{\ge0}$;
\item an irrational base $\beta>1$ and a rational base $\gamma>1$.
\end{enumerate}
\end{corollary}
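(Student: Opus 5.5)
We deduce Corollary~\ref{thm:affine} from Theorem~\ref{thm:general}. It suffices to check that each pair satisfies the hypotheses of Theorem~\ref{thm:singular} (after choosing which base plays the role of $\beta$). We must also check that the exceptional configuration $b_+=b_-+1$ with $b_-\in\exc$ cannot occur. Theorem~\ref{thm:general} then leaves $\delta_0$ as the only common invariant probability.

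\emph{Part 1.} Take $\gamma=k\beta+c$, so $a=k\ge2$ and the constant term is $c\ge0$. Then $\gamma\ge2\beta>1$, and $a+c\ge k\ge2$, so $a+c\ne1$ and Theorem~\ref{thm:singular} applies. For the exception we need $|\gamma-\beta|=1$, i.e.\ $(k-1)\beta+c=1$. Since $\beta$ is irrational and $k-1\ne0$, the left side is irrational, so this is impossible. This is also consistent with the remark after Theorem~\ref{thm:general} that the exception forces $(a,c)\in\{(1,1),(1,-1)\}$.

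\emph{Part 2.} Here $\beta>1$ is irrational and $\gamma>1$ is rational. Write $\gamma=0\cdot\beta+c$ with $a=0$ and $c=\gamma\in\mathbb Q$. Then $a+c=\gamma>1$, so $a+c\ne1$. The hypotheses of Theorem~\ref{thm:singular} require nothing further: $a$ may be zero, and only $a,c\in\mathbb Q$ and $\gamma>1$ are used. Again, $b_+=b_-+1$ would make $\beta=\gamma\pm1$ rational, which is a contradiction. Hence the exceptional case of Theorem~\ref{thm:general} is excluded.

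The only point needing care is whether Theorem~\ref{thm:general}, as stated, permits $a=0$ in the affine relation. Its statement only assumes \eqref{eq:affine-parameters}, so it does. If one preferred to avoid the degenerate relation, one could use the symmetric formulation of Remark~\ref{rem:symmetric}. In that formulation $1,\beta,\gamma$ are rationally dependent since $\gamma\in\mathbb Q$, the base $\beta$ is irrational, and $(\gamma-1)/(\beta-1)$ is irrational because $\gamma\ne1$. Otherwise the argument is a direct verification.
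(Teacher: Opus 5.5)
Your proposal is correct and follows the paper's own proof: you specialize Theorem~\ref{thm:general} with $a=k$ (so $a+c\ge 2$) and with $a=0$, $c=\gamma>1$, and you rule out the exceptional case because the gap $\gamma-\beta$ is irrational. Your side checks are also sound: $\gamma\ge2\beta>1$ in the first family, and $a=0$ is allowed by \eqref{eq:affine-parameters}.
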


In the first family the gap $(k-1)\beta+c$ is irrational, so this conclusion goes beyond rational gaps. Theorem~\ref{thm:general} also allows rational coefficients $a$ of either sign.

None of these results requires multiplicative independence. Two bases $b_1,b_2>1$ are multiplicatively independent when $\log b_1/\log b_2\notin\mathbb Q$. Two dependent pairs illustrate the different conclusions. For $\varphi=(1+\sqrt5)/2$, the relation $\varphi+1=\varphi^2$ gives
$$
\common_{\varphi,1}=\{(1-t)\delta_0+t\nu_\varphi:0\le t\le1\}.
$$
In contrast, $2\sqrt2=(\sqrt2)^3$, but Corollary~\ref{thm:affine} shows that $\delta_0$ is the only common invariant probability for $T_{\sqrt2}$ and $T_{2\sqrt2}$. See \cite[Corollary 1.3]{HW} for dependent pairs with the same R\'enyi--Parry measure.

\subsection{Relation to earlier work}

The study of common invariant measures for multiplication maps goes back to Furstenberg \cite{Furstenberg}. Rudolph \cite{Rudolph} proved a rigidity theorem for relatively prime integer bases, and Johnson \cite{Johnson} extended it to multiplicatively independent integer bases. Under joint ergodicity and positive entropy, the common measure must be Lebesgue measure. Here at least one base is irrational, and the maps need not commute. We use the difference between their two compositions to obtain an irrational rotation.

For noninteger bases, Bertrand-Mathis \cite{BM} studied common invariant measures for Pisot bases under Bernoulli-type assumptions. Hochman and Shmerkin \cite[Corollary 1.11]{HS} considered multiplicatively independent bases $b_1,b_2>1$, with $b_1$ Pisot. They proved that a common invariant probability is the common R\'enyi--Parry measure if every one of its $T_{b_2}$-ergodic components has positive entropy. Our results concern specific arithmetic relations between the bases and cover all common probabilities, including measures with zero entropy. Their parameter range differs from that of the Hochman--Shmerkin theorem; neither result contains the other in full.

Brown and Yin \cite[Theorem 1]{BY} studied when two R\'enyi--Parry measures are equal. We use the classification of Huang and Wang \cite[Theorem 1.1]{HW}, with the bases written in increasing order. They describe their result as a proof of a conjecture of Bertrand-Mathis \cite[Section III]{BM}. These results classify the common absolutely continuous invariant probabilities. Theorem~\ref{thm:singular} controls the singular part for our parameter family; the absolutely continuous classification and the exceptional density are inputs from this earlier work.

Recent work also studies coincidence of absolutely continuous invariant measures for alternate base transformations \cite{DL} and for negative beta-transformations \cite{HuangSun}. Those systems differ from the two positive beta-transformations considered here. Our focus is the singular part of a measure preserved by both maps.

\subsection{Proof idea and scope}

The excluded relation $a+c=1$ can have common fixed points away from zero; Section~\ref{sec:scope} describes all such points. Pairs for which $1,\beta,\gamma$ are linearly independent over $\mathbb Q$ are also outside our result.

The proof starts by clearing denominators:
$$
q\gamma=p\beta+r,\qquad p,r\in\mathbb Z,\quad q\in\mathbb N.
$$
Multiplication modulo one by $q$ turns the difference between the two compositions into an integer multiple of the rotation by $\beta$. A finite signed sum of rotated measures is then invariant under that rotation. Its mass is
$$
(p+r-q)\int_X x\dd\mu(x).
$$
For singular $\mu$, the sum is also singular, so this mass must vanish. Since $p+r-q=q(a+c-1)\ne0$, the first moment vanishes and $\mu$ is supported at zero. The auxiliary measure may be signed, so the argument works even when the rotation indices have both signs. The nonnegativity needed at the last step is that of the original measure.

Section~\ref{sec:prelim} records the measure-theoretic facts and the known absolutely continuous classification. Section~\ref{sec:rotation} proves the singular result, and Section~\ref{sec:classification} completes the classification. Section~\ref{sec:scope} explains the excluded relation. The appendices verify the exceptional density and give an equivalent form of the rotation argument.

\section{Preliminaries}\label{sec:prelim}

All finite measures below are nonnegative unless stated otherwise. We identify $X$ with $\mathbb R/\mathbb Z$ as a Borel space when discussing rotations and Fourier coefficients. Multiplication by a noninteger base is still defined on $[0,1)$, not as a group endomorphism of the circle. Put
$$
R_\theta(x)=\{x+\theta\},\qquad
\rotpush_\theta\eta=(R_\theta)_*\eta.
$$
For a Borel set $E$, $\eta|_E$ denotes restriction.

\begin{lemma}\label{lem:types}
Suppose that $F:X\to X$ has finitely many interval branches and is affine with nonzero slope on each branch. We may treat finitely many endpoints separately. Pushforward by $F$ preserves absolute continuity and singularity relative to $\lambda$. If $F_*\mu=\mu$, then both parts of the Lebesgue decomposition of $\mu$ are $F$-invariant.
\end{lemma}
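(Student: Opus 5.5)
The plan is to work branch by branch and reduce everything to two facts about a single affine bijection: it carries Lebesgue-null sets to Lebesgue-null sets, and so does its inverse. Write the branches of $F$ as intervals $I_1,\dots,I_m$, with $F|_{I_j}(x)=s_jx+t_j$ and $s_j\ne0$. The finitely many endpoints form a Lebesgue-null set, and their images are finitely many points, hence also null. Deleting them changes nothing in the statements, except that an atom sitting at an endpoint must be assigned to the singular part; atoms are singular in any case, so this costs nothing.

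For pushforwards, take a Borel set $E$. Then
$$
F^{-1}E=\bigcup_j \bigl(I_j\cap (F|_{I_j})^{-1}E\bigr),
$$
and each piece is an affine preimage, so $\lambda\bigl((F|_{I_j})^{-1}E\bigr)\le |s_j|^{-1}\lambda(E)$. Thus $\lambda(E)=0$ forces $\lambda(F^{-1}E)=0$. If $\mu\ll\lambda$, we get $F_*\mu(E)=\mu(F^{-1}E)=0$, so $F_*\mu\ll\lambda$.

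For singularity, suppose $\mu$ is concentrated on a Borel set $N$ with $\lambda(N)=0$. Then $F(N)=\bigcup_j F|_{I_j}(N\cap I_j)$ is a finite union of affine images of null sets, so it is Borel and null. Moreover $F_*\mu(X\setminus F(N))\le\mu(X\setminus N)=0$, because $F^{-1}(X\setminus F(N))\subset X\setminus N$. Hence $F_*\mu\perp\lambda$. The same argument applies word for word to finite signed measures through their total variation, which will be needed later.

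For the decomposition statement, write $\mu=\mu_{ac}+\mu_s$. Pushforward is linear, so
$$
\mu=F_*\mu=F_*\mu_{ac}+F_*\mu_s .
$$
By the previous two steps, $F_*\mu_{ac}$ is absolutely continuous and $F_*\mu_s$ is singular. Uniqueness of the Lebesgue decomposition then gives $F_*\mu_{ac}=\mu_{ac}$ and $F_*\mu_s=\mu_s$.

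The only point needing care is the singular case. One has to check that $F(N)$ is measurable and null, which holds because $F$ is injective and affine on each branch, and that the endpoint bookkeeping does not disturb the decomposition. I expect no genuine obstacle beyond this.
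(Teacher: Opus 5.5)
Your proposal is correct and follows the paper's proof essentially step for step. Null sets have null preimages under each affine branch, which gives preservation of absolute continuity. The finite union of the affine images of $N$ is a Borel null set carrying $F_*\mu$, which gives preservation of singularity. Linearity plus uniqueness of the Lebesgue decomposition then gives invariance of both parts. The only difference is endpoint bookkeeping: the paper adds the finitely many endpoint images to the null carrier. Your remark about ``deleting'' endpoints is unnecessary once the branches $I_j$ partition $X$, which your formula for $F^{-1}E$ already assumes.
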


\begin{proof}
The inverse image of a Lebesgue null set is null on each affine branch, and adding finitely many endpoints does not change this. Thus $\eta\ll\lambda$ implies $F_*\eta\ll\lambda$.

If $\eta\perp\lambda$, choose a Borel null set $N$ carrying $\eta$. On each branch, $F$ is the restriction of an affine homeomorphism 
of the real line. It therefore maps the part of $N$ in that branch to a Borel null set. The union of these images and the finitely many endpoint images is therefore a Borel null set carrying $F_*\eta$. Hence $F_*\eta\perp\lambda$.

If $\mu=\mu_{\mathrm{ac}}+\mu_{\mathrm{s}}$ and $F_*\mu=\mu$, the decomposition
$$
\mu=F_*\mu_{\mathrm{ac}}+F_*\mu_{\mathrm{s}}
$$
is a Lebesgue decomposition. Its uniqueness proves that both parts are invariant. The same null-set argument shows that pushforward preserves singularity for finite signed measures, where singularity means that the total variation is carried by a Lebesgue null set.
\end{proof}

\begin{lemma}\label{lem:rotation}
If $\theta\notin\mathbb Q$ and a finite signed measure $\eta$ satisfies $\rotpush_\theta\eta=\eta$, then $\eta=\eta(X)\lambda$.
\end{lemma}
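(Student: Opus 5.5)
The plan is to use Fourier coefficients on $X\cong\mathbb R/\mathbb Z$. For $n\in\mathbb Z$ define
$$
\hat\eta(n)=\int_X e^{-2\pi i n x}\dd\eta(x),
$$
which is finite because $\eta$ is a finite signed measure. Since $R_\theta$ is the translation $x\mapsto x+\theta$ on the circle, the change of variables formula for pushforwards gives
$$
\widehat{\rotpush_\theta\eta}(n)=\int_X e^{-2\pi i n(x+\theta)}\dd\eta(x)=e^{-2\pi i n\theta}\,\hat\eta(n).
$$
Here we use that $e^{-2\pi i n\{y\}}=e^{-2\pi i n y}$, so the fractional part causes no trouble.

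Invariance $\rotpush_\theta\eta=\eta$ therefore gives $(1-e^{-2\pi i n\theta})\hat\eta(n)=0$ for every $n$. Since $\theta\notin\mathbb Q$, we have $e^{-2\pi i n\theta}\ne1$ whenever $n\ne0$. Hence $\hat\eta(n)=0$ for all $n\ne0$, while $\hat\eta(0)=\eta(X)$.

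Now set $\xi=\eta-\eta(X)\lambda$. This is a finite signed Borel measure on the circle, and all of its Fourier coefficients vanish, since $\hat\lambda(n)=0$ for $n\ne0$ and $\hat\lambda(0)=1$. It remains to show $\xi=0$, which is the uniqueness theorem for Fourier--Stieltjes coefficients.

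This uniqueness step is the only nontrivial ingredient. I would argue it directly. Trigonometric polynomials are uniformly dense in $C(\mathbb R/\mathbb Z)$ by Fejér's theorem or Stone--Weierstrass, so $\int f\dd\xi=0$ for every continuous $f$ on the circle. By the Riesz representation theorem, applied to the Jordan decomposition $\xi=\xi^+-\xi^-$, two finite positive Borel measures on a compact metric space that agree on all continuous functions coincide. Hence $\xi^+=\xi^-$, so $\xi=0$.

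One small point needs care: $X=[0,1)$ is identified with the circle as a Borel space, so continuous functions on the circle correspond to continuous $f$ on $[0,1]$ with $f(0)=f(1)$. The identification is a Borel isomorphism, so regularity holds and no mass issue arises at the endpoint. Thus $\eta=\eta(X)\lambda$.
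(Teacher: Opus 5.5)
Your proposal is correct and follows the paper's proof essentially step for step: Fourier coefficients, the factor $e^{-2\pi i n\theta}\ne1$ for $n\ne0$ kills every nonzero coefficient, and density of trigonometric polynomials in the continuous functions on the circle identifies $\eta$ with $\eta(X)\lambda$. Your remarks on the Jordan decomposition and the Borel identification of $X$ with the circle only spell out details that the paper leaves implicit.
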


\begin{proof}
For each $n\in\mathbb Z$, invariance gives
$$
\widehat\eta(n)=e^{-2\pi in\theta}\widehat\eta(n),\qquad
\widehat\eta(n)=\int_X e^{-2\pi inx}\dd\eta(x).
$$
The factor $e^{-2\pi in\theta}$ differs from one whenever $n\ne0$. Thus all nonzero Fourier coefficients vanish, and $\widehat\eta(0)=\eta(X)$. These are the coefficients of $\eta(X)\lambda$. Trigonometric polynomials are dense in the 
continuous functions on the circle, so the two measures are equal.
\end{proof}

We use two classical facts. First, $\nu_b$ is the unique absolutely continuous invariant probability for $T_b$ \cite{Renyi,Parry}. Uniqueness among invariant probabilities equivalent to $\lambda$ is enough here: if $\xi\ll\lambda$ is an invariant probability, then $(\xi+\nu_b)/2$ is invariant and equivalent to $\lambda$, so uniqueness implies $\xi=\nu_b$.

Second, for two distinct noninteger bases in increasing order, the classification is
\begin{equation}\label{eq:classical}
\begin{gathered}
1<b_1<b_2,\qquad b_1,b_2\notin\mathbb Z,\\
\nu_{b_1}=\nu_{b_2}\quad\Longleftrightarrow\quad
b_2=b_1+1\ \text{ and }\ b_1\in\exc.
\end{gathered}
\end{equation}
We use \cite[Theorem 1.1]{HW}; see also \cite[Theorem 1]{BY}. We need this result only for the absolutely continuous part.

\section{A rotation identity and singular rigidity}\label{sec:rotation}

\begin{proposition}\label{prop:rotation}
Assume \eqref{eq:affine-parameters}. Choose integers $p,r$ and a positive integer $q$ such that $q\gamma=p\beta+r$, and put
$$
D=p+r-q=q(a+c-1)\ne0.
$$
For every finite nonnegative common invariant measure $\mu$, we can construct a finite signed measure $\sigma$ such that
\begin{equation}\label{eq:identity}
\sigma=Dm\lambda,\qquad m=\int_Xx\dd\mu(x).
\end{equation}
If $\mu\perp\lambda$, then the total variation of $\sigma$ is also singular with respect to $\lambda$.
\end{proposition}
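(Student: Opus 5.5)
The construction compares the two compositions $T_\beta\circ T_\gamma$ and $T_\gamma\circ T_\beta$ after multiplying by $q$ modulo one. Put $M_q(x)=\{qx\}$ and, for $x\in X$, $k(x)=\lfloor\beta x\rfloor$ and $j(x)=\lfloor\gamma x\rfloor$. Modulo one,
$$
q\,T_\beta(T_\gamma x)\equiv q\beta(\gamma x-j)=q\beta\gamma x-q\beta j.
$$
Using $q\gamma=p\beta+r$ with $r,k\in\mathbb Z$,
$$
q\,T_\gamma(T_\beta x)\equiv q\gamma(\beta x-k)=q\beta\gamma x-(p\beta+r)k\equiv q\beta\gamma x-p\beta k.
$$
Hence on each cell $P_{jk}=\{x:j(x)=j,\ k(x)=k\}$ of the finite partition we get
$$
M_q\circ T_\beta\circ T_\gamma=R_{n_{jk}\beta}\circ M_q\circ T_\gamma\circ T_\beta,\qquad n_{jk}=pk-qj\in\mathbb Z.
$$
This is the "difference by rotations on a finite partition". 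I will check it carefully, including the boundary points of the cells. Since the cells are finitely many intervals, those endpoints form a finite set that can be handled separately as in Lemma~\ref{lem:types}.

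Next I turn this into a $\rotpush_\beta$-invariant signed measure. Put $\eta_{jk}=(M_q\circ T_\gamma\circ T_\beta)_*(\mu|_{P_{jk}})$. Since $\mu$ is invariant under both maps, $(M_q)_*\mu$ equals both $\sum_{j,k}\eta_{jk}$ and $\sum_{j,k}\rotpush_{n_{jk}\beta}\eta_{jk}$. Therefore
$$
\sum_{j,k}(\rotpush_{n_{jk}\beta}-I)\eta_{jk}=0.
$$
Telescoping gives $\rotpush_{n\beta}-I=(\rotpush_\beta-I)S_n$, where $S_n=\sum_{i=0}^{n-1}\rotpush_{i\beta}$ for $n\ge0$ and $S_n=-\sum_{i=n}^{-1}\rotpush_{i\beta}$ for $n<0$. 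It follows that $\tau=\sum_{j,k}S_{n_{jk}}\eta_{jk}$ satisfies $\rotpush_\beta\tau=\tau$. I will take $\sigma=-\tau$. Since $\beta$ is irrational, Lemma~\ref{lem:rotation} gives $\sigma=\sigma(X)\lambda$. Rotations preserve mass, so $S_n$ multiplies total mass by $n$. Thus
$$
\tau(X)=\sum_{j,k}n_{jk}\mu(P_{jk})=\int_X\bigl(p\lfloor\beta x\rfloor-q\lfloor\gamma x\rfloor\bigr)\dd\mu(x).
$$

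To evaluate this I write $\lfloor\beta x\rfloor=\beta x-T_\beta x$. Invariance of $\mu$ under $T_\beta$ gives $\int T_\beta x\dd\mu=m$, so $\int\lfloor\beta x\rfloor\dd\mu=(\beta-1)m$. Likewise $\int\lfloor\gamma x\rfloor\dd\mu=(\gamma-1)m$. Substituting $q\gamma=p\beta+r$,
$$
\tau(X)=\bigl(p\beta-p-q\gamma+q\bigr)m=(q-p-r)m=-Dm.
$$
Hence $\sigma(X)=Dm$, which is \eqref{eq:identity}.

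For the singularity claim, $M_q\circ T_\gamma\circ T_\beta$ and each rotation $R_{i\beta}$ are piecewise affine with finitely many branches and nonzero slopes. If $\mu\perp\lambda$, Lemma~\ref{lem:types} shows that each $\eta_{jk}$ and each rotate of it is singular. $\sigma$ is a finite signed combination of these, so its total variation is carried by the union of finitely many null sets and is singular. The main obstacle is the modular bookkeeping in the first step: getting the correct sign of $n_{jk}$, and making sure the identity holds pointwise on each cell up to finitely many points. Once that is settled, the rest is telescoping and the moment computation.
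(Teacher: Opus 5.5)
Your proposal is correct and is essentially the paper's proof: the same quotient by $x\mapsto\{qx\}$, the same telescoping operators, and the same first-moment computation. The only cosmetic differences are that you partition by the pair of digits and compare the compositions in the opposite order, so your $n_{jk}$ is the negative of the paper's $j(x)=q\lfloor\gamma x\rfloor-p\lfloor\beta x\rfloor$ and you need $\sigma=-\tau$. One small correction: no endpoint bookkeeping is needed, because your congruences hold exactly at every point of each cell $P_{jk}$, endpoints included; this matters, since discarding finitely many points would not be legitimate for a singular $\mu$ that may have atoms there.
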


\begin{proof}
Set $T=T_\beta$, $S=T_\gamma$ and
$$
d(x)=\lfloor\beta x\rfloor,\qquad e(x)=\lfloor\gamma x\rfloor,
\qquad j(x)=q e(x)-p d(x).
$$
The function $j$ takes only finitely many integer values. Its level sets $E_j=\{x:j(x)=j\}$ form a finite Borel partition after empty sets are omitted. They are unions of half-open intervals obtained by intersecting the digit partitions of $T$ and $S$, so the definitions include all endpoints.

Computing modulo one, we have
\begin{align*}
S(Tx)&\equiv\beta\gamma x-\gamma d(x)\pmod1,\\
T(Sx)&\equiv\beta\gamma x-\beta e(x)\pmod1.
\end{align*}
Since $q\gamma=p\beta+r$, their difference is
\begin{equation}\label{eq:defect}
S(Tx)-T(Sx)\equiv\frac\beta q j(x)-\frac r q d(x)\pmod1.
\end{equation}
Put $\pi_q(x)=\{qx\}$. The term $rd(x)$ is an integer, so
\begin{equation}\label{eq:quotient}
\pi_q\circ S\circ T=R_{j\beta}\circ\pi_q\circ T\circ S
\quad\text{on }E_j.
\end{equation}
Define
$$
\eta_j=(\pi_q\circ T\circ S)_*(\mu|_{E_j}),\qquad V=\rotpush_\beta.
$$
Common invariance implies invariance under both compositions, regardless of whether they commute. Thus
\begin{equation}\label{eq:balance}
\sum_j\eta_j=(\pi_q)_*\mu=\sum_j V^j\eta_j.
\end{equation}
We use only this identity; no invariance of $(\pi_q)_*\mu$ under $T$ or $S$ is assumed.

For any integer $j$, define the following operator on finite signed measures:
\begin{equation}\label{eq:telescoping}
H_j(V)=
\begin{cases}
\displaystyle\sum_{\ell=0}^{j-1}V^\ell,&j>0,\\[3pt]
0,&j=0,\\[3pt]
\displaystyle-\sum_{\ell=j}^{-1}V^\ell,&j<0.
\end{cases}
\end{equation}
Negative powers are defined because rotations are invertible. In all three cases,
$$
(V-I)H_j(V)=V^j-I,
\qquad (H_j(V)\eta)(X)=j\eta(X).
$$
It follows from \eqref{eq:balance} that the finite signed measure
\begin{equation}\label{eq:sigma}
\sigma=\sum_j H_j(V)\eta_j
\end{equation}
satisfies $V\sigma=\sigma$. Finiteness follows, for example, from
$$
\|\sigma\|_{\mathrm{TV}}\le\sum_j |j|\mu(E_j)<\infty.
$$
The number $\beta$ is irrational. Lemma~\ref{lem:rotation}, applied to this signed measure, gives $\sigma=\sigma(X)\lambda$.

It remains to compute the signed total mass. By common invariance,
$$
\int_X d\dd\mu=(\beta-1)m,\qquad
\int_X e\dd\mu=(\gamma-1)m.
$$
Consequently,
\begin{equation}\label{eq:moment}
\sigma(X)=\sum_j j\mu(E_j)=\int_Xj\dd\mu
=[q(\gamma-1)-p(\beta-1)]m=Dm.
\end{equation}
This proves \eqref{eq:identity}. Here $\sigma(X)$ is signed total mass, not total variation.

Finally, if $\mu\perp\lambda$, each $\eta_j$ is singular by Lemma~\ref{lem:types}: the map $\pi_q\circ T\circ S$ has finitely many affine branches with nonzero slope $q\beta\gamma$. Each summand in \eqref{eq:sigma} is a finite signed sum of rotations of singular measures. A finite union of their null supports carries the total variation of $\sigma$. This proves the last assertion.
\end{proof}

\begin{proof}[Proof of Theorem~\ref{thm:singular}]
For a finite nonnegative singular common invariant measure $\mu$, Proposition~\ref{prop:rotation} makes $\sigma=Dm\lambda$ singular as well as absolutely continuous. Hence $Dm=0$. Since $D\ne0$, we have $\int_Xx\dd\mu=0$. For each $n\ge2$,
$$
0=\int_Xx\dd\mu(x)\ge\frac1n\mu([1/n,1))\ge0.
$$
Taking the union over $n$ gives $\mu((0,1))=0$, so $\mu=\mu(X)\delta_0$.
\end{proof}

\begin{remark}[When the construction is positive]\label{rem:positive}
If $a=k\in\mathbb N$ and $c\ge0$, choose $q$ so that $qc\in\mathbb Z$, and take $p=qk$, $r=qc$. Then
$$
j(x)=q\lfloor k\{\beta x\}+cx\rfloor\ge0.
$$
Thus $\sigma$ is nonnegative and has mass $q(k+c-1)m$. For general rational affine relations, $j$ may have both signs, but Lemma~\ref{lem:rotation} still applies to the signed measure $\sigma$. Nonnegativity of the original measure $\mu$ is needed when deducing support at zero from its vanishing first moment.
\end{remark}

\section{Completion of the classification}\label{sec:classification}

\begin{proof}[Proof of Theorem~\ref{thm:general}]
Let $\mu$ be a common invariant probability and write its Lebesgue decomposition as $\mu=\mu_{\mathrm{ac}}+\mu_{\mathrm{s}}$. By Lemma~\ref{lem:types}, both parts are separately invariant under both maps. Put $t=\mu_{\mathrm{ac}}(X)$. Theorem~\ref{thm:singular} gives $\mu_{\mathrm{s}}=(1-t)\delta_0$.

If $t>0$, uniqueness of the absolutely continuous invariant probability for each map gives
$$
\mu_{\mathrm{ac}}/t=\nu_\beta=\nu_\gamma.
$$
If $\gamma$ is an integer, then $\nu_\gamma=\lambda$. But $\lambda$ is not invariant under a noninteger beta-transformation: writing $\beta=n+u$ with $0<u<1$, its pushforward has density $(n+\ind_{[0,u)})/\beta$, which is not constant. Hence this case is impossible.

Otherwise both bases are nonintegers. They are distinct, so \eqref{eq:classical} gives $b_+=b_-+1$ and $b_-\in\exc$. Conversely, in that case the known common probability $\nu_{b_-}=\nu_{b_+}$ and $\delta_0$ give every stated convex combination. In all other cases $t=0$ and $\mu=\delta_0$.
\end{proof}

\begin{proof}[Proof of Corollary~\ref{thm:main}]
Apply Theorem~\ref{thm:general} with $a=1$ and $c>0$. Both bases are irrational, and the smaller one is $\beta$. This gives \eqref{eq:classification}. Formula~\eqref{eq:density} is the known exceptional density, verified directly in Appendix~\ref{app:density}. Since $\nu_\beta$ has no atoms, the mixing coefficient is uniquely determined by $t=1-\mu(\{0\})$.
\end{proof}

\begin{proof}[Proof of Corollary~\ref{thm:affine}]
For the first family, take $a=k$. Then $a+c>1$, and the gap $(k-1)\beta+c$ is irrational, so the exceptional case of Theorem~\ref{thm:general} cannot occur. For the second family, take $a=0,c=\gamma>1$. Again $a+c\ne1$ and the gap is irrational. The same theorem applies.
\end{proof}

\begin{remark}\label{cor:atomic}
Under Theorem~\ref{thm:general}, the only purely atomic common invariant probability is $\delta_0$. A nonatomic common invariant probability exists exactly in the exceptional case, where it is $\nu_{b_-}$. Every finite common invariant measure is a nonnegative multiple of one of the classified probabilities.
\end{remark}

\section{Discussion: scope and the excluded relation}\label{sec:scope}

The coefficient $D$ in \eqref{eq:moment} vanishes precisely when $a+c=1$, that is, when $(\gamma-1)/(\beta-1)$ is rational. The construction in Proposition~\ref{prop:rotation} still gives a rotation-invariant signed measure, but its mass is zero regardless of the first moment of $\mu$. The following proposition describes the common fixed points in this case.

\begin{proposition}[Common fixed points]\label{prop:fixed}
Let $\beta,\gamma>1$ and suppose
$$
\gamma-1=\frac mn(\beta-1),\qquad m,n\in\mathbb N,\quad \gcd(m,n)=1.
$$
Then
\begin{equation}\label{eq:fixed}
\operatorname{Fix}(T_\beta)\cap\operatorname{Fix}(T_\gamma)
=\left\{\frac{n\ell}{\beta-1}:\ell\in\mathbb Z_{\ge0},\ n\ell<\beta-1\right\}.
\end{equation}
Every probability supported on this finite set is invariant under both maps. The set contains a nonzero point if and only if $\beta>n+1$.
\end{proposition}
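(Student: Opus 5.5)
The plan is to compute the fixed points of each map separately and then intersect them.

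\emph{Fixed points of one map.} For $b>1$, a point $x\in X$ satisfies $T_b(x)=x$ exactly when $bx-x=\lfloor bx\rfloor$ is an integer. So $(b-1)x=k\in\mathbb Z_{\ge0}$. Conversely, if $x=k/(b-1)\in[0,1)$, then $bx=x+k$ with $0\le x<1$, so $\lfloor bx\rfloor=k$ and $T_b(x)=x$. Hence
$$
\operatorname{Fix}(T_b)=\{k/(b-1):k\in\mathbb Z_{\ge0},\ k<b-1\}.
$$

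\emph{Intersection.} A common fixed point has the form $x=k/(\beta-1)=k'/(\gamma-1)$ with $k,k'\in\mathbb Z_{\ge0}$. Substituting $\gamma-1=\frac mn(\beta-1)$ gives $k'=\frac mn k$, that is, $nk'=mk$. Since $\gcd(m,n)=1$, this forces $n\mid k$. Write $k=n\ell$, so $k'=m\ell$. The constraint $x<1$ is the single condition $n\ell<\beta-1$, and it then automatically gives $m\ell<\gamma-1$. Conversely, every such $\ell$ gives a point that lies in both fixed-point sets. This proves \eqref{eq:fixed}.

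\emph{Invariant measures and the nonzero point.} Any probability supported on finitely many common fixed points is a convex combination of point masses $\delta_x$ with $T_\beta x=T_\gamma x=x$. Each such $\delta_x$ is invariant under both maps, and so is any convex combination. The set contains a nonzero point exactly when $\ell=1$ is allowed, that is, when $n<\beta-1$, which is the condition $\beta>n+1$.

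There is no real obstacle here. The only step needing care is the divisibility argument $n\mid k$ from coprimality, together with checking that the bound for $\gamma$ follows from the bound for $\beta$.
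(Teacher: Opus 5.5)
Your proposal is correct and follows essentially the same route as the paper. You characterize fixed points of $T_b$ by $(b-1)x\in\mathbb Z_{\ge0}$, use coprimality to get $n\mid k$, impose $x<1$ to obtain $n\ell<\beta-1$, and read off $\beta>n+1$ from the case $\ell=1$.
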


\begin{proof}
A point $x\in[0,1)$ is fixed by $T_b$ exactly when $(b-1)x$ is an integer. If $x$ is fixed by both maps, put $d=(\beta-1)x\in\mathbb Z_{\ge0}$. Then $(\gamma-1)x=md/n$ is an integer, so coprimality implies $d=n\ell$ for some $\ell\in\mathbb Z_{\ge0}$. The condition $x<1$ gives $n\ell<\beta-1$. Conversely, each point in \eqref{eq:fixed} gives the two integers $n\ell$ and $m\ell$, and is therefore fixed by both maps. The remaining assertions follow directly.
\end{proof}

For example, $\beta=1+\sqrt2$ and $\gamma=1+2\sqrt2$ have exactly the two common fixed points $0$ and $1/\sqrt2$. Hence every convex combination of their point masses is a common singular probability. This shows that the excluded relation cannot be added to Theorem~\ref{thm:singular} as stated. Proposition~\ref{prop:fixed} classifies common fixed points, not all common atomic measures: it does not rule out finite sets on which the two maps act as nontrivial permutations.

For any pair with irrational difference, a common absolutely continuous probability is impossible. For two noninteger bases, this follows from \eqref{eq:classical}. If one is an integer, its absolutely continuous invariant probability is $\lambda$, which is not invariant under the other map. Lemma~\ref{lem:types} then makes every common probability singular. Theorem~\ref{thm:general} classifies these measures when the bases satisfy a rational affine relation with $a+c\ne1$. The excluded relation suggests a next question: beyond the fixed points in Proposition~\ref{prop:fixed}, which finite sets can carry a common invariant probability? Another question is whether common singular probabilities can be classified when $1,\beta,\gamma$ are linearly independent over $\mathbb Q$. The present proof uses rational dependence to obtain integer powers of a single irrational rotation, so it does not answer this second question.

\appendix
\section{The exceptional density}\label{app:density}

For completeness, we verify the classical two-step Parry density from \cite[Section 2]{BY} and \cite[Proposition 2.1]{HW} using the transfer operator.

\begin{proposition}\label{prop:density}
Suppose that $\beta^2-u\beta-v=0$ with $u,v\in\mathbb N$ and $1\le v\le u$. Then the probability measure in \eqref{eq:density} is invariant under both $T_\beta$ and $T_{\beta+1}$.
\end{proposition}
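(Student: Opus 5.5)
The plan is a direct transfer-operator computation. I work with the unnormalized function $g=1+\beta^{-1}\ind_{[0,a)}$, since normalization is irrelevant for invariance. For a base $b>1$, the measure $g\,d\lambda$ is $T_b$-invariant exactly when $\mathcal L_b g=g$ almost everywhere, where
$$
(\mathcal L_b g)(y)=\frac1b\sum_{k\ge0,\ (y+k)/b<1} g\Bigl(\frac{y+k}{b}\Bigr),\qquad y\in[0,1).
$$
Only finitely many endpoints are exceptional. First I record the arithmetic facts I will use: $a=\beta-u=v/\beta$, so $a\beta=v$; since $v\le u<\beta$, we have $a\in(0,1)$; and $a(\beta+1)=v+a$.

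\textbf{Step 1: $T_\beta$.} Since $\beta=u+a$, the preimage $(y+k)/\beta$ lies in $[0,1)$ for $k=0,\dots,u-1$ and every $y$. The branch $k=u$ contributes exactly when $y<a$. So the constant part of $g$ contributes $(u+\ind_{[0,a)}(y))/\beta$.

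For the indicator part, $(y+k)/\beta<a$ iff $y+k<v$. Because $k$ is an integer and $y\in[0,1)$, this holds iff $k\le v-1$, independently of $y$. All these $k$ are admissible branches since $v\le u$. The indicator part therefore contributes $\beta^{-1}\cdot v/\beta=a/\beta$. Summing,
$$
\mathcal L_\beta g=\frac{u+a}{\beta}+\frac1\beta\ind_{[0,a)}=g.
$$

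\textbf{Step 2: $T_{\beta+1}$.} Now $\beta+1=(u+1)+a$. The full branches are $k=0,\dots,u$, and the branch $k=u+1$ is present iff $y<a$.

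For the indicator part, $(y+k)/(\beta+1)<a$ iff $y+k<v+a$. This holds for all $k\le v-1$, and for $k=v$ exactly when $y<a$. All such $k$ are admissible branches since $v\le u$. Hence
$$
\mathcal L_{\beta+1}g=\frac1{\beta+1}\Bigl[u+1+\ind_{[0,a)}+\frac{v+\ind_{[0,a)}}{\beta}\Bigr]
=\frac{u+1+a}{\beta+1}+\frac{1+\beta^{-1}}{\beta+1}\ind_{[0,a)}.
$$
Using $v/\beta=a$ and $u+1+a=\beta+1$, this equals $1+\beta^{-1}\ind_{[0,a)}=g$.

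\textbf{Step 3: normalization.} We have $\int g\,d\lambda=1+a/\beta$, which gives the normalized density in \eqref{eq:density}. By uniqueness of absolutely continuous invariant probabilities, this also identifies the measure as $\nu_\beta=\nu_{\beta+1}$.

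The only delicate point is the bookkeeping in Steps 1 and 2. One must check which branch indices are admissible, and that the indicator condition $k\le v-1$ (or $k=v$ with $y<a$) falls within the admissible range. This is exactly where the hypothesis $v\le u$ enters. Boundary points such as $y=a$ form a null set and do not matter.
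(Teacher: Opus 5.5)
Your proposal is correct and follows the paper's proof essentially step for step. Both use the same transfer-operator computation, counting $u+\ind_{[0,a)}(y)$ (respectively $u+1+\ind_{[0,a)}(y)$) inverse branches and $v$ (respectively $v+\ind_{[0,a)}(y)$) preimages in $[0,a)$, with $v\le u$ guaranteeing that these are admissible branches, and then normalize by $1+a/\beta$; the only cosmetic difference is that you obtain $a\in(0,1)$ directly from $a=v/\beta$ and $v\le u<\beta$, whereas the paper locates $\beta\in(u,u+1)$ via a sign change of $x^2-ux-v$.
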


\begin{proof}
The polynomial $x^2-ux-v$ has one positive root. Its values at $u$ and $u+1$ are $-v<0$ and $u+1-v>0$, respectively. Thus
$$
u<\beta<u+1,\qquad a=\beta-u=v/\beta\in(0,1).
$$
The root $\beta$ is irrational. Otherwise, the rational root theorem would make it an integer, contrary to $u<\beta<u+1$. Set $h(x)=1+\beta^{-1}\ind_{[0,a)}(x)$.

For a noninteger $b=n+a$ with $n\in\mathbb N$ and $0<a<1$, change of variables on the inverse branches shows that the density of $(T_b)_*(f\lambda)$ is
\begin{equation}\label{eq:transfer}
(\mathcal L_bf)(y)=\frac1b\left[
\sum_{k=0}^{n-1}f\left(\frac{y+k}{b}\right)
+\ind_{[0,a)}(y)f\left(\frac{y+n}{b}\right)\right].
\end{equation}
The final branch is included only when $y<a$. The formula holds almost everywhere on $X$.

Write $I(y)=\ind_{[0,a)}(y)$. For $b=\beta$, the number of inverse branches at $y$ is $u+I(y)$. An inverse image $(y+k)/\beta$ lies in $[0,a)$ if and only if $y+k<\beta a=v$. For $0\le y<1$, this occurs for exactly the $v$ indices $k=0,\ldots,v-1$, all of which are valid branches because $v\le u$. Hence
$$
\mathcal L_\beta h(y)
=\frac{u+I(y)+v/\beta}{\beta}
=1+\frac{I(y)}\beta=h(y).
$$
We used $u+v/\beta=\beta$.

For $b=\beta+1$, the total number of inverse branches is $u+1+I(y)$. Since $(\beta+1)a=v+a$, the number of inverse images in $[0,a)$ is $v+I(y)$. Therefore
\begin{align*}
\mathcal L_{\beta+1}h(y)
&=\frac{u+1+I(y)+\beta^{-1}(v+I(y))}{\beta+1}\\
&=1+\frac{I(y)}\beta=h(y).
\end{align*}
Finally, $\int_Xh\dd\lambda=1+a/\beta$. Dividing $h\lambda$ by this mass proves the claim.
\end{proof}

\section{Finite cyclic averaging}

There is an equivalent form of the quotient construction. Keep the notation of Proposition~\ref{prop:rotation}, and set
$$
E_{h,j}=\{x:d(x)=h,\ q e(x)-ph=j\},\qquad
\rho_{h,j}=(T\circ S)_*(\mu|_{E_{h,j}}).
$$
Equation~\eqref{eq:defect} gives
$$
\sum_{h,j}\rho_{h,j}=\mu
=\sum_{h,j}\rotpush_{j\beta/q-rh/q}\rho_{h,j}.
$$
The unnormalized averaging operator $\cycsum_q=\sum_{s=0}^{q-1}\rotpush_{s/q}$ satisfies $\cycsum_q\rotpush_{-rh/q}=\cycsum_q$. Thus, with $\tau_j=\sum_h\cycsum_q\rho_{h,j}$ and $W=\rotpush_{\beta/q}$, averaging gives $\sum_j(W^j-I)\tau_j=0$. The operators in \eqref{eq:telescoping} now give
$$
\Sigma=\sum_jH_j(W)\tau_j,\qquad W\Sigma=\Sigma,\qquad
\Sigma(X)=q\int_X j\dd\mu=qDm.
$$
Since $\beta/q$ is irrational, $\Sigma=qDm\lambda$ by Lemma~\ref{lem:rotation}. If $\mu$ is singular, so is $\Sigma$, and $D\ne0$ again forces $m=0$. The identities
$$
(\pi_q)_*\cycsum_q=q(\pi_q)_*,\qquad
(\pi_q)_*W^\ell=V^\ell(\pi_q)_*\quad(\ell\in\mathbb Z)
$$
show that $(\pi_q)_*\Sigma=q\sigma$. Thus both constructions give the same first-moment obstruction, with the factor $q$ coming from unnormalized averaging.

\section*{Acknowledgements and use of generative AI}

The author initially developed the adjacent-base case $\beta$ and $\beta+1$. GPT-6 (\mbox{OpenAI}) was used to assist with extending the argument to rational gaps and rational affine relations, including the signed rotation identity, and with checking calculations, finding references, and editing the text. The author is responsible for the mathematical content and all claims in the paper.

\section*{Funding}

This work was partially supported by the National Natural Science Foundation of China (No.\ 123B2006).

\section*{Data availability statement}

No new data were created or analysed in this study. The results are theoretical, and all proofs are included in the article.

\end{document}